\theoremstyle{plain}
\newtheorem{theo}{Theorem}[section]%[Definizioni]
\newtheorem{prop}[theo]{Proposition}%[Definizioni]
\theoremstyle{definition}
\newtheorem{definition}[theo]{Definition}%[Definizioni]
\theoremstyle{remark}
\newtheorem{rem}[theo]{Remark}%[Definizioni]
\numberwithin{equation}{section}
\newcommand{\R}{\mathbb{R}}
\newcommand{\N}{\mathbb{N}}
\title{Optimal stability in the identification of a
rigid inclusion in an isotropic Kirchhoff-Love plate}
\author{Antonino Morassi\thanks{Dipartimento Politecnico di Ingegneria e Architettura,
Universit\`a degli Studi di Udine, via Cotonificio 114, 33100
Udine, Italy. E-mail: \textsf{antonino.morassi@uniud.it}}, \  Edi
Rosset\thanks{Dipartimento di Matematica e Geoscienze,
Universit\`a degli Studi di Trieste, via Valerio 12/1, 34127
Trieste, Italy. E-mail: \textsf{rossedi@units.it}} \ and Sergio
Vessella\thanks{Dipartimento di Matematica e Informatica ``Ulisse
Dini'', Universit\`a degli Studi di Firenze, Via Morgagni 67/a,
50134 Firenze, Italy. E-mail: \textsf{sergio.vessella@unifi.it}}}
\begin{document}

\maketitle

\begin{abstract}

In this paper we consider the inverse problem of determining a
rigid inclusion inside a thin plate by applying a couple field at
the boundary and by measuring the induced transversal displacement
and its normal derivative at the boundary of the plate. The plate
is made by non-homogeneous, linearly elastic and isotropic
material. Under suitable a priori regularity assumptions on the
boundary of the inclusion, we prove a constructive stability
estimate of log type. Key mathematical tool is a recently proved optimal three
spheres inequality at the boundary for solutions to the
Kirchhoff-Love plate's equation.

\medskip

\noindent\textbf{Mathematics Subject Classification (2010)}:
Primary 35B60. Secondary 35B30, 35Q74, 35R30.

\medskip

\noindent \textbf{Keywords}: Inverse problems, elastic plates,
stability estimates, unique continuation, rigid inclusion.
\end{abstract}

\centerline{}

\section{Introduction}

In this paper we consider the inverse problem of the stable
determination of a rigid inclusion embedded in a thin elastic
plate by measuring the transverse displacement and its normal
derivative at the boundary induced by a couple field applied at
the boundary of the plate. We prove that the stability estimate of
log-log type found in \cite{M-R-V2} can be improved to a single
logarithm in the case in which the plate is made of isotropic
linear elastic material. {}From the point of view of applications,
modern requirements of structural condition assessment demand the
identification of defects using non-destructive methods, and,
therefore, the present results can be useful in quality control of
plates. We refer, among other contributions, to Bonnet and
Constantinescu \cite{B-C} for a general overview of inverse
problems arising in diagnostic analysis applied to linear
elasticity and, in particular, to plate theory (\cite[Section $5.3$]{B-C}),
and to \cite{K} for the identification of a stiff
inclusion in a composite thin plate based on wavelet analysis of
the eigenfunctions.

In order to describe our stability result, let us introduce the
Kirchhoff-Love model of thin, elastic isotropic plate under
infinitesimal deformation; see, for example, \cite{Gu}. Let the
middle plane of the plate $\Omega$ be a bounded domain of $\R^2$
with regular boundary. The rigid inclusion $D$ is modelled as a
simply connected domain compactly contained in $\Omega$. Under the
assumptions of vanishing transversal forces in $\Omega$ and
assigned couple field $\widehat{M}$ acting on $\partial \Omega$,
the transversal displacement $w \in H^2(\Omega)$ of the plate
satisfies the following mixed boundary value problem
\begin{center}
\( {\displaystyle \left\{
\begin{array}{lr}
     {\rm div}({\rm div} (
      {\mathbb P}\nabla^2 w))=0,
      & \mathrm{in}\ \Omega \setminus \overline {D},
        \vspace{0.25em}\\
      ({\mathbb P} \nabla^2 w)n\cdot n=-\widehat M_n, & \mathrm{on}\ \partial \Omega,
          \vspace{0.25em}\\
      {\rm div}({\mathbb P} \nabla^2 w)\cdot n+(({\mathbb P} \nabla^2
      w)n\cdot \tau),_s
      =(\widehat M_\tau),_s, & \mathrm{on}\ \partial \Omega,
        \vspace{0.25em}\\
      w|_{\overline{D}} \in \mathcal{A}, &\mathrm{in}\ \overline{D},
          \vspace{0.25em}\\
        \frac{\partial w^e}{\partial n} = \frac{\partial w^i}{\partial n}, &\mathrm{on}\ \partial {D},
          \vspace{0.25em}\\
\end{array}
\right. } \) \vskip -8.9em
\begin{eqnarray}
& & \label{eq:1.dir-pbm-incl-rig-1}\\
& & \label{eq:1.dir-pbm-incl-rig-2}\\
& & \label{eq:1.dir-pbm-incl-rig-3}\\
& & \label{eq:1.dir-pbm-incl-rig-4}\\
& & \label{eq:1.dir-pbm-incl-rig-5}
\end{eqnarray}
\end{center}
coupled with the \emph{equilibrium conditions} for the rigid
inclusion $D$
\begin{multline}
  \label{eq:1.equil-rigid-incl}
    \int_{\partial D} \left ( {\rm div}({\mathbb P} \nabla^2 w)\cdot n+(({\mathbb P} \nabla^2
  w)n\cdot \tau),_s \right )g - (({\mathbb P} \nabla^2 w)n\cdot n)
  g_{,n} =0, \\   \quad \hbox{for every } g\in \mathcal{A},
\end{multline}
where $\mathcal{A}$ denotes the space of affine functions. We
recall that, {}from the physical point of view, the boundary
conditions
\eqref{eq:1.dir-pbm-incl-rig-4}-\eqref{eq:1.dir-pbm-incl-rig-5}
correspond to ideal connection between the boundary of the rigid
inclusion and the surrounding elastic material, see, for example,
\cite[Section $10.10$]{O-R}. The unit vectors $n$ and $\tau$ are
the outer normal to $\Omega \setminus \overline{D}$ and the
tangent vector to $\partial D$, respectively. Moreover, we have
defined $w^e \equiv w|_{ \Omega \setminus \overline{D}}$ and
$w^i\equiv w|_{\overline{D} }$. The functions $\widehat M_{\tau}$,
$\widehat M_n$ are the twisting and bending component of the
assigned couple field $\widehat M$, respectively. The plate tensor
$\mathbb P$ is given by $\mathbb P = \frac{h^3}{12}\mathbb C$,
where $h$ is the constant thickness of the plate and $\mathbb C$
is the non-homogeneous Lam\'{e} elasticity tensor describing the
response of the material.

The existence of a solution $w \in H^2(\Omega)$ of the problem
\eqref{eq:1.dir-pbm-incl-rig-1}--\eqref{eq:1.equil-rigid-incl} is
ensured by general results, provided that $\widehat{M}\in
H^{-\frac{1}{2}}(\partial\Omega, \R^2)$, with $\int_{\partial
\Omega} \widehat{M}_i = 0$, for $i=1,2$, and $\mathbb P$ is
bounded and strongly convex. Let us notice that $w$ is uniquely
determined up to addition of an affine function.

Let us denote by $w_i$ a solution to
\eqref{eq:1.dir-pbm-incl-rig-1}--\eqref{eq:1.equil-rigid-incl} for
$D=D_i$, $i=1,2$. In order to deal with the stability issue, we
found it convenient to replace each solution $w_i$ with
$v_i=w_i-g_i$, where $g_i$ is the affine function which coincides
with $w_i$ on $\partial D_i$, $i=1,2$. By this approach,
maintaining the same letter to denote the solution, the
equilibrium problem
\eqref{eq:1.dir-pbm-incl-rig-1}--\eqref{eq:1.dir-pbm-incl-rig-5} can
be rephrased in terms of the following mixed boundary value
problem with homogeneous Dirichlet conditions on the boundary of
the rigid inclusion
\begin{center}
\( {\displaystyle \left\{
\begin{array}{lr}
     {\rm div}({\rm div} (
      {\mathbb P}\nabla^2 w))=0,
      & \mathrm{in}\ \Omega \setminus \overline {D},
        \vspace{0.25em}\\
      ({\mathbb P} \nabla^2 w)n\cdot n=-\widehat M_n, & \mathrm{on}\ \partial \Omega,
          \vspace{0.25em}\\
      {\rm div}({\mathbb P} \nabla^2 w)\cdot n+(({\mathbb P} \nabla^2
      w)n\cdot \tau),_s
      =(\widehat M_\tau),_s, & \mathrm{on}\ \partial \Omega,
        \vspace{0.25em}\\
      w=0, &\mathrm{on}\ \partial D,
          \vspace{0.25em}\\
        \frac{\partial w}{\partial n} = 0, &\mathrm{on}\ \partial {D},
          \vspace{0.25em}\\
\end{array}
\right. } \) \vskip -8.9em
\begin{eqnarray}
& & \label{eq:1.dir-pbm-incl-rig-1bis}\\
& & \label{eq:1.dir-pbm-incl-rig-2bis}\\
& & \label{eq:1.dir-pbm-incl-rig-3bis}\\
& & \label{eq:1.dir-pbm-incl-rig-4bis}\\
& & \label{eq:1-dir-pbm-incl-rig-5bis}
\end{eqnarray}
\end{center}
for which there exists a unique solution $w\in H^2(\Omega\setminus
\overline{D})$. The arbitrariness of this normalization, related
to the fact that $g_i$ is unknown, $i=1,2$, leads to the following
formulation of the stability issue.

\textit{Given an open portion $\Sigma$ of $\partial \Omega$,
satisfying suitable regularity assumptions, and given two solutions $w_i$ to
\eqref{eq:1.dir-pbm-incl-rig-1bis}--\eqref{eq:1-dir-pbm-incl-rig-5bis} when $D=D_i$, $i=1,2$, satisfying,
for some $\epsilon>0$,
\begin{equation}
  \label{eq:cond_Dir_stab}
  \min_{g\in \mathcal{A}}\left\{ \| w_1 - w_2 - g \|_{L^2(\Sigma)}
  + \left\| \frac{\partial}{\partial n}(w_1 - w_2 - g) \right\|_{L^2(\Sigma)}\right\}\leq \epsilon,
\end{equation}
to evaluate the rate at which the Hausdorff distance $d_{\cal H}(
\overline{D_1},\overline{D_2} )$ between $D_1$ and $D_2$ tends to
zero as $\epsilon$ tends to zero.}

In this paper we prove the following quantitative stability
estimate of log type for inclusions $D$ of $C^{6,\alpha}$ class:
\begin{equation}
    \label{eq:log-estimate}
    d_{\cal H}( \overline{D_1},\overline{D_2} ) \leq C | \log \epsilon |^{-\eta},
\end{equation}
where $C$, $\eta$, $C>0$ and $\eta>0$, are constants only
depending on the a priori data, see Theorem \ref{theo:Main} for a
precise statement.

The above estimate is an improvement of the log-log type stability
estimate found in \cite{M-R-V2}, although it must be said that the
latter is not restricted to isotropic materials and also applies
to less regular inclusions (e.g., $D$ of $C^{3,1}$ class). It is
worth to notice that a single logarithmic rate of convergence for
the fourth order elliptic equation modelling the deflection of a
Kirchhoff-Love plate is expected to be optimal, as it is in fact
for the analogous inverse problem in the scalar elliptic case,
which models the detection of perfectly conducting inclusions in
an electric conductor in terms of measurements of potential and
current taken on an accessible portion of the boundary of the
body, as shown by the counterexamples due to Alessandrini
(\cite{Al}), Alessandrini and Rondi (\cite{Al-R}), see also \cite{Dc-R}.

The methods used to prove \eqref{eq:log-estimate} are inspired to
the approach presented in the seminal paper \cite{A-B-R-V} where,
for the first time, it was shown how logarithmic stability
estimates for the inverse problem of determining unknown
boundaries can be derived by using quantitative estimates of
Strong Unique Continuation at the Boundary (SUCB) which ensure a
polynomial vanishing rate of the solutions satisfying homogeneous
Dirichlet or Neumann conditions at the boundary. Precisely, in
\cite{A-B-R-V} the key tool was a Doubling Inequality at the
boundary established by Adolfsson and Escauriaza in \cite{l:ae}.

Following the direction traced in \cite{A-B-R-V}, other kinds of
quantitative estimates of the SUCB turned out to be crucial
properties to prove optimal stability estimates for inverse
boundary value problems with unknown boundaries in different
frameworks, see for instance \cite{S} where the case of Robin
boundary condition is investigated. Let us recall, in the context
of the case of thermic conductors involving parabolic equations,
the three cylinders inequality and the one-sphere two-cylinders
inequality at the boundary (\cite{C-R-V1},
\cite{C-R-V2}, \cite{E-F-V}, \cite{E-V}, \cite{Ve1}), and similar
estimate at the boundary for the case of wave equation with time
independent coefficients \cite{S-Ve}, \cite{Ve2}, \cite{Ve3}.

In the present paper, the SUCB property used to improve the double
logarithmic estimate found in \cite{M-R-V2} takes the form of an
optimal three spheres inequality at the boundary. This latter
result was recently proved in \cite{A-R-V} for isotropic elastic
plates under homogeneous Dirichlet boundary conditions, and leads
to a Finite Vanishing Rate at the Boundary (Proposition
\ref{prop:SUCP_boundary}).

Other main mathematical tools are quantitative estimates of Strong
Unique Continuation at the Interior, essentially based on a three
spheres inequality at the interior obtained in \cite{M-R-V1} which
allows to derive quantitative estimates of unique continuation
{}from Cauchy data (Proposition \ref{prop:Cauchy1}), a Lipschitz
estimate of Propagation of Smallness (Proposition \ref{prop:LPS})
and the Finite Vanishing Rate at the Interior (Proposition
\ref{prop:SUCP_interior}) for the solutions to the plate equation.

Let us observe that estimate \eqref{eq:log-estimate} is the first
stability estimate with optimal rate of convergence in the
framework of linear elasticity. Indeed, up to now, the analogous
estimate for the determination, within isotropic elastic bodies,
of rigid inclusions (\cite{Morassi-Rosset2}), cavities
(\cite{Morassi-Rosset1}) or pressurized cavities (\cite{A-B-R})
show a double logarithmic character, and the same convergence rate
has been established by Lin, Nakamura and Wang for star-shaped
cavities inside anisotropic elastic bodies (\cite{L-N-W}).

The plan of the paper is as follows. Main notation and a priori
information are presented in section \ref{Notation}. In section
\ref{Main}, we first state some auxiliary propositions regarding
the estimate of continuation {}from Cauchy data (Proposition
\ref{prop:Cauchy1}) and {}from the interior (Proposition
\ref{prop:LPS}), and the determination of the finite vanishing
rate of the solutions to the plate equation at the interior
(Proposition \ref{prop:SUCP_interior}) and at the Dirichlet
boundary (Proposition \ref{prop:SUCP_boundary}). Finally, in the
second part of section \ref{Main} we give a proof of the main
theorem (Theorem \ref{theo:Main}).

\section{Notation}
\label{Notation}

Let $P=(x_1(P), x_2(P))$ be a point of $\R^2$.
We shall denote by $B_r(P)$ the disk in $\R^2$ of radius $r$ and
center $P$ and by $R_{a,b}(P)$ the rectangle of center $P$ and sides parallel
to the coordinate axes, of length $2a$ and $2b$, namely
$R_{a,b}(P)=\{x=(x_1,x_2)\ |\ |x_1-x_1(P)|<a,\ |x_2-x_2(P)|<b \}$. To simplify the notation,
we shall denote $B_r=B_r(O)$, $R_{a,b}=R_{a,b}(O)$.

Given a bounded domain $\Omega$ in $\R^2$ we shall denote
\begin{equation}
  \label{eq:Omega_rho}
   \Omega_\rho=\{x\in \Omega\ |\ \hbox{dist}(x,\partial\Omega)>\rho\}.
\end{equation}

\noindent When representing locally a boundary as a graph, we use
the following definition.

\begin{definition}
  \label{def:reg_bordo} (${C}^{k,\alpha}$ regularity)
Let $\Omega$ be a bounded domain in ${\R}^{2}$. Given $k,\alpha$,
with $k\in\N$, $0<\alpha\leq 1$, we say that a portion $S$ of
$\partial \Omega$ is of \textit{class ${C}^{k,\alpha}$ with
constants $r_{0}$, $M_{0}>0$}, if, for any $P \in S$, there
exists a rigid transformation of coordinates under which we have
$P=0$ and
\begin{equation*}
  \Omega \cap R_{r_0,2M_0r_0}=\{x \in R_{r_0,2M_0r_0} \quad | \quad
x_{2}>g(x_1)
  \},
\end{equation*}
where $g$ is a ${C}^{k,\alpha}$ function on
$[-r_0,r_0]$
satisfying
\begin{equation*}
g(0)=g'(0)=0,
\end{equation*}
\begin{equation*}
\|g\|_{{C}^{k,\alpha}([-r_0,r_0])} \leq M_0r_0,
\end{equation*}
where
\begin{equation*}
\|g\|_{{C}^{k,\alpha}([-r_0,r_0])} = \sum_{i=0}^k  r_0^i\sup_{[-r_0,r_0]}|g^{(i)}|+r_0^{k+\alpha}|g|_{k,\alpha},
\end{equation*}

\begin{equation*}
|g|_{k,\alpha}= \sup_ {\overset{\scriptstyle t,s\in [-r_0,r_0]}{\scriptstyle
t\neq s}}\left\{\frac{|g^{(k)}(t) - g^{(k)}(s)|}{|t-s|^\alpha}\right\}.
\end{equation*}

\end{definition}

Given a bounded domain $\Omega$ in $\R^2$ such that $\partial
\Omega$ is of class $C^{k,\alpha}$, with $k\geq 1$, we consider as
positive the orientation of the boundary induced by the outer unit
normal $n$ in the following sense. Given a point
$P\in\partial\Omega$, let us denote by $\tau=\tau(P)$ the unit
tangent at the boundary in $P$ obtained by applying to $n$ a
counterclockwise rotation of angle $\frac{\pi}{2}$, that is
\begin{equation}
    \label{eq:2.tangent}
        \tau=e_3 \times n,
\end{equation}
where $\times$ denotes the vector product in $\R^3$ and $\{e_1,
e_2, e_3\}$ is the canonical basis in $\R^3$.

In the sequel we shall denote by $C$ constants which may change
{}from line to line.

\subsection{A priori information}
\medskip
\noindent {\it i) A priori information on the domain.}

Let us consider a thin plate
$\Omega\times[-\frac{h}{2},\frac{h}{2}]$ with middle surface
represented by a bounded domain $\Omega$ in $\R^2$ and having
uniform thickness $h$, $h<<\hbox{diam}(\Omega)$.

We shall assume
that, given $r_0$, $M_1>0$,
\begin{equation}
   \label{eq:bound_area}
\hbox{diam}(\Omega) \leq M_1 r_0.
\end{equation}
We shall also assume that $\Omega$ contains an open simply
connected rigid inclusion $D$ such that
\begin{equation}
   \label{eq:compactness}
    \hbox{dist}(D, \partial \Omega) \geq r_0.
\end{equation}
Moreover, we denote by $\Sigma$ an open portion
within $\partial \Omega$ representing the part of the boundary
where measurements are taken.

Concerning the regularity of the boundaries, given
$M_0\geq\frac{1}{2}$ and $\alpha$, $0<\alpha\leq 1$, we assume that
\begin{equation}
   \label{eq:reg_Omega}
\partial\Omega \hbox{ is of } class\  C^{2,1}
\ with\ constants\ r_0, M_0,
\end{equation}
\begin{equation}
   \label{eq:reg_Sigma}
\Sigma \hbox{ is of } class\  C^{3,1} \ with\ constants\
r_0, M_0.
\end{equation}
\begin{equation}
   \label{eq:reg_D}
\partial D \hbox{ is of } class\  C^{6,\alpha}
\ with\ constants\ r_0, M_0.
\end{equation}

Let us notice that, without loss of generality, we have chosen
$M_0\geq\frac{1}{2}$ to ensure that $B_{r_0}(P) \subset
R_{r_0,2M_0r_0}(P)$ for every $P \in
\partial \Omega$.

Moreover, we shall assume that for some $P_0\in\Sigma$ and some $\delta_0$, $0<\delta_0<1$,
\begin{equation}
   \label{eq:large_enough}
   \partial\Omega\cap
R_{r_0,2M_0r_0}(P_0)\subset\Sigma,
\end{equation}
and that
\begin{equation}
   \label{eq:small_enough}
   |\Sigma|\leq(1-\delta_0)|\partial\Omega|.
\end{equation}
\medskip
\noindent {\it ii) Assumptions about the boundary data.}

On the Neumann data $\widehat{M}$ we assume that
\begin{equation}
   \label{eq:reg_M}
\widehat{M}\in L^2(\partial \Omega,\R^2),\quad
(\widehat{M}_n,(\widehat{M}_\tau),_s)\not\equiv 0,
\end{equation}
\begin{equation}
   \label{eq:supp_M}
\hbox{supp}(\widehat{M})\subset\subset\Sigma,
\end{equation}
the (obvious) compatibility condition
\begin{equation}
   \label{eq:M_comp}
    \int_{\partial \Omega} \widehat{M}_i = 0, \quad i=1,2,
\end{equation}
and that, for a given constant $F>0$,
\begin{equation}
\label{eq:M_frequency}
   \frac{\|\widehat{M}\|_{L^2(\partial
   \Omega ,\R^2)}}{ \|\widehat{M}\|_{H^{-\frac{1}{2}}(\partial \Omega,\R^2)}}\leq
   F.
\end{equation}

{\it iii) Assumptions about the elasticity tensor.}

Let us assume that the plate is made by elastic isotropic material, the plate
tensor $\mathbb P$ is defined by
\begin{equation}
  \label{eq:3.emme_compact}
   \mathbb P A = B \left [ (1-\nu)A^{sym}+\nu (tr A)I_2 \right ],
\end{equation}
for every $2 \times 2$ matrix $A$, where $I_2$ is the $2\times 2$
identity matrix and $\hbox{tr}(A)$ denotes the trace of the matrix
$A$. The
  \emph{bending stiffness} (per unit length) of the plate is given by the
  function
\begin{equation}
  \label{eq:3.stiffness}
  B(x)=\frac{h^3}{12}\left(\frac{E(x)}{1-\nu^2(x)}\right),
\end{equation}
where the \emph{Young's modulus} $E$ and the \emph{Poisson's coefficient} $\nu$
can be written in terms of the Lam\'{e} moduli as follows
\begin{equation}
  \label{eq:3.E_nu}
  E(x)=\frac{\mu(x)(2\mu(x)+3\lambda(x))}{\mu(x)+\lambda(x)},\qquad\nu(x)=\frac{\lambda(x)}{2(\mu(x)+\lambda(x))}.
\end{equation}
Hence, in this case, the displacement equation of equilibrium
\eqref{eq:1.dir-pbm-incl-rig-1} is
\begin{equation}
    \label{eq:equil-eq-isotropic}
    {\rm div}\left ({\rm div} \left ( B((1-\nu)\nabla^2w + \nu \Delta w I_2) \right ) \right )=0, \qquad\hbox{in
    } \Omega.
\end{equation}

We make the following strong convexity assumptions on the Lam\'e moduli
\begin{equation}
  \label{eq:3.Lame_convex}
  \mu(x)\geq \alpha_0>0,\qquad 2\mu(x)+3\lambda(x)\geq\gamma_0>0, \qquad \hbox{a.e. in } \Omega,
\end{equation}
where $\alpha_0$, $\gamma_0$ are positive constants.

We assume that the Lam\'{e} moduli $\lambda,\mu$ satisfy the following regularity assumptions

\begin{equation}
  \label{eq:C4Lame}
  \|\lambda\|_{C^4(\overline{\Omega})}, \quad \|\mu\|_{C^4(\overline{\Omega})}\leq \Lambda_0.
\end{equation}

Under the above assumptions, by standard variational arguments
(see, for example, \cite{l:agmon}), problem
\eqref{eq:1.dir-pbm-incl-rig-1bis}--\eqref{eq:1-dir-pbm-incl-rig-5bis}
has a unique solution $w\in H^2(\Omega\setminus \overline{D})$
satisfying
\begin{equation}
    \label{eq:w_stima_H2}
    \|w\|_{H^2(\Omega\setminus \overline{D})}\leq C r_0^2\|\widehat{M}\|_{H^{-\frac{1}{2}}(\partial\Omega,\R^2)},
\end{equation}
where $C>0$ only depends on $\alpha_0$, $\gamma_0$, $M_0$, and $M_1$.

In the sequel, we shall refer to the set of constants $\alpha_0$,
$\gamma_0$, $\Lambda_0$, $\alpha$, $M_0$, $M_1$, $\delta_0$ and
$F$ as to the \textit{a priori} data.

\section{Statement and proof of the main result}
\label{Main}

Here and in the sequel we shall denote by $G$ the connected
component of $\Omega \setminus   \overline{(D_1 \cup D_2)}$ such
that $\Sigma \subset \partial{G}$.

\begin{theo}[Stability result]
  \label{theo:Main}
Let $\Omega$ be a bounded domain in $\R^2$ satisfying
\eqref{eq:bound_area} and \eqref{eq:reg_Omega}. Let $D_i$,
$i=1,2$, be two simply connected open subsets of $\Omega$
satisfying \eqref{eq:compactness} and \eqref{eq:reg_D}. Moreover,
let $\Sigma$ be an open portion of $\partial\Omega$ satisfying
\eqref{eq:reg_Sigma}, \eqref{eq:large_enough} and
\eqref{eq:small_enough}. Let $\widehat{M}\in
L^2(\partial\Omega,\R^2)$ satisfy
\eqref{eq:reg_M}--\eqref{eq:M_frequency} and let the plate tensor
$\mathbb{P}$ given by \eqref{eq:3.emme_compact} with Lam\'e moduli
satisfying the regularity assumptions \eqref{eq:C4Lame} and the
strong convexity condition \eqref{eq:3.Lame_convex}. Let $w_i\in
H^2(\Omega \setminus \overline{D_i})$ be the solution to
\eqref{eq:1.dir-pbm-incl-rig-1bis}--\eqref{eq:1-dir-pbm-incl-rig-5bis} when $D=D_i$, $i=1,2$. If, given
$\epsilon>0$, we have
\begin{equation}
    \label{eq:small_L2}
\min_{g \in
\cal{A}} \left\{\|w_1 - w_2 -g \|_{L^2(\Sigma)}+r_0
\left\|\frac{\partial}{\partial n}
(w_1 - w_2 -g) \right\|_{L^2(\Sigma)}\right\}\leq
\epsilon,
\end{equation}
then we have

\begin{equation}
    \label{eq:small_Haus_inclusion}
d_{\cal H}( \overline{D_1},\overline{D_2} ) \leq
r_0\omega\left(\frac{\epsilon}
{r_0^2\|\widehat{M}\|_{H^{-\frac{1}{2}}(\partial
\Omega,\R^2)}}\right),
\end{equation}
where $\omega$ is an increasing continuous function on
$[0,\infty)$ which satisfies
\begin{equation}
    \label{eq:omega_log}
\omega(t)\leq C(|\log t|)^{-\eta}, \quad\hbox{for every }t, \
0<t<1,
\end{equation}
and $C$, $\eta$, $C>0$, $\eta>0$, are constants only
depending on the a priori data.
\end{theo}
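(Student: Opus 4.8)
The plan is to follow the now-classical scheme of \cite{A-B-R-V} for identifying unknown boundaries from Cauchy data, adapting it to the fourth-order Kirchhoff--Love operator by using the quantitative unique continuation tools quoted in Section \ref{Main}. Set $w = w_1 - w_2$ on the connected component $G$ of $\Omega\setminus\overline{(D_1\cup D_2)}$ abutting $\Sigma$. On $\Sigma$ the two solutions satisfy the same Neumann conditions \eqref{eq:1.dir-pbm-incl-rig-2bis}--\eqref{eq:1.dir-pbm-incl-rig-3bis}, so after subtracting a suitable affine function $g$ (the minimizer in \eqref{eq:small_L2}) the difference $w-g$ has small Cauchy data on $\Sigma$: $\|w-g\|_{L^2(\Sigma)}+r_0\|\partial_n(w-g)\|_{L^2(\Sigma)}\le\epsilon$, while its higher-order Neumann traces vanish identically. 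Since affine functions lie in the kernel of the plate operator, $w-g$ still solves the equation in $G$, and \eqref{eq:w_stima_H2} provides an a priori $H^2$ bound in terms of $r_0^2\|\widehat M\|_{H^{-1/2}}$. Normalizing $\epsilon$ by this quantity, I may assume the right-hand side of \eqref{eq:small_Haus_inclusion} is the relevant dimensionless small parameter.

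First I would propagate the smallness of the Cauchy data from $\Sigma$ into the interior of $G$: by Proposition \ref{prop:Cauchy1} (estimate of continuation from Cauchy data) one controls $w-g$ on a fixed interior ball $B_{r}(P_1)\subset G_{r_0/C}$ by a power of $\epsilon$, i.e. one gets $\|w-g\|_{L^2(B_r(P_1))}\le C\,\epsilon^{\theta}$ for some $\theta\in(0,1)$ up to the a priori $H^2$ norm. Then, via the Lipschitz propagation of smallness (Proposition \ref{prop:LPS}) along a chain of balls contained in $G$, this smallness is transported up to any point of $G$ that stays a fixed distance $\sim r_0$ away from $\partial(D_1\cup D_2)\cup\partial\Omega$; the number of balls in the chain is controlled by $M_0,M_1,\delta_0$, so the exponent degrades only by a fixed power. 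This produces an estimate of the form $\|w-g\|_{L^\infty(G_{c r_0})}\le C\,\epsilon^{\theta'}$ with $\theta'$ depending only on the a priori data. The standard argument then shows that $d_{\mathcal H}(\overline{D_1},\overline{D_2})$ small is forced unless $w-g$ is itself small deep inside: assuming for contradiction that $d:=d_{\mathcal H}(\overline{D_1},\overline{D_2})$ is not small, there is a point $x_0$, say on $\partial D_1$, at distance $\sim d$ from $D_2$, around which $w=w_1-w_2$ coincides with $-w_2$ (since $w_1=0$, $\partial_n w_1=0$ there), and $w_2$ is a genuine solution there not identically affine.

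The crux, and the place where the isotropy and the new optimal three spheres inequality at the boundary enter, is to quantify \emph{from below} the size of $w-g$ near such a point $x_0\in\partial D_1$: because $w_1$ satisfies the homogeneous Dirichlet conditions \eqref{eq:1.dir-pbm-incl-rig-4bis}--\eqref{eq:1-dir-pbm-incl-rig-5bis} on $\partial D_1$, the Finite Vanishing Rate at the boundary (Proposition \ref{prop:SUCP_boundary}), derived from the optimal three spheres inequality of \cite{A-R-V}, gives that $w$ cannot vanish faster than a fixed polynomial rate at $x_0$; combined with the Finite Vanishing Rate at the interior (Proposition \ref{prop:SUCP_interior}) to handle the case where $x_0$ sits in the interior of $G$, one obtains a lower bound $\|w-g\|_{L^2(B_{d}(x_1))}\ge C^{-1}d^{\,N}$ for a suitable nearby point $x_1$ and a fixed power $N$ depending only on the a priori data — and it is precisely the single logarithm (rather than double) that comes out because the three spheres inequality is \emph{optimal}, yielding a true polynomial rather than a $\log$-type vanishing bound. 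Chaining this ball $B_d(x_1)$ back to $G_{cr_0}$ through $O(\log(r_0/d))$ balls via Proposition \ref{prop:LPS} relates $d^{\,N}$ to $\epsilon^{\theta'}$ raised to a power $\sim d^{-B}$ for some exponent $B$, which upon taking logarithms twice yields $d\le C|\log(\epsilon/(r_0^2\|\widehat M\|_{H^{-1/2}}))|^{-\eta}$, i.e. \eqref{eq:small_Haus_inclusion}--\eqref{eq:omega_log}.

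I expect the main obstacle to be the careful bookkeeping that keeps every constant and exponent depending only on the a priori data $\alpha_0,\gamma_0,\Lambda_0,\alpha,M_0,M_1,\delta_0,F$, in particular: (i) ensuring that the geometric constructions (interior balls in $G$, chains of balls, the "fat" point $x_1$ realizing the Hausdorff distance, the regularized boundary near $x_0$) are uniform in the unknown domains $D_1,D_2$, which is where the $C^{6,\alpha}$ regularity \eqref{eq:reg_D} with fixed constants $r_0,M_0$ is used; (ii) handling the affine indeterminacy $g$ consistently through all the propagation estimates, since the $H^2$ bound and the unique continuation estimates must be applied to $w-g$, not $w$; and (iii) checking that the hypotheses of Propositions \ref{prop:SUCP_interior} and \ref{prop:SUCP_boundary} are met after the subtraction of $g$ — notably that $w-g\not\equiv 0$, which follows from \eqref{eq:reg_M} guaranteeing $w_2$ is not affine. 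The analytic heart, the polynomial vanishing rate, is already packaged in Propositions \ref{prop:SUCP_boundary} and \ref{prop:SUCP_interior}, so the remaining work is the stability-estimate assembly rather than new PDE estimates.
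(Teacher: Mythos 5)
Your overall strategy is the one the paper follows (the \cite{A-B-R-V} scheme: continuation from the Cauchy data into the hidden region, then a quantitative vanishing-rate lower bound near the point realizing the distance), but two of your steps, as written, would not deliver the single logarithm and one is applied to the wrong function. First, Proposition \ref{prop:SUCP_boundary} requires homogeneous Dirichlet conditions on $\partial D$, so near $x_0\in\partial D_1$ it must be applied to $w_1$ alone, not to $w_1-w_2-g$ (which does not vanish on $\partial D_1$; moreover $w_2$ is not even defined on $B_{d_m}(x_0)\subset D_2$). The smallness input is not the smallness of the difference near $x_0$ but the fact that $B_{d_m}(x_0)\cap(\Omega\setminus\overline{D_1})\subset(\Omega\setminus\overline{G})\setminus\overline{D_1}$, where Proposition \ref{prop:Cauchy1} bounds $\int|\nabla^2 w_1|^2$ by $\eta$; this is precisely what that proposition is stated to give, and it is \emph{not} a power $\epsilon^{\theta}$ but a logarithmic (in general log-log) modulus.

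Second, your concluding mechanism --- chaining $B_d(x_1)$ back through $O(\log(r_0/d))$ balls and ``taking logarithms twice'' --- is exactly the mechanism that produces a log-log estimate, not a single log. The paper avoids any $d$-dependent chain: Proposition \ref{prop:SUCP_boundary} compares the small ball $B_{d_m}(x_0)$ \emph{in one step} with the fixed ball $B_{r_0}(x_0)$, with the polynomial factor $(d_m/r_0)^{\tau}$, and Proposition \ref{prop:LPS} is then invoked only at the fixed scale $r_0/4$, so its exponential factor $\exp[A(r_0/\rho)^B]$ is an absolute constant. This yields the clean power law $\eta\geq C(d_m/r_0)^{\tau}r_0^2\|\widehat M\|^2_{H^{-1/2}}$. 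The single logarithm then comes from a bootstrap you omit: the modulus in Proposition \ref{prop:Cauchy1} is log-log in general and improves to $|\log t|^{-\sigma}$ only under the a priori closeness $d_{\cal H}(\overline{\Omega\setminus D_1},\overline{\Omega\setminus D_2})\leq d_0$; one must first run the argument with the log-log modulus to conclude $d\leq d_0$ for $\epsilon$ small, and only then invoke the improved modulus. Finally, the vanishing-rate argument controls $d_m$ and $d=d_{\cal H}(\overline{\Omega\setminus D_1},\overline{\Omega\setminus D_2})$, not $d_{\cal H}(\overline{D_1},\overline{D_2})$ directly; the passage from $d$ to $\delta=d_{\cal H}(\overline{D_1},\overline{D_2})$ requires the three-case analysis (via Proposition 3.6 of \cite{A-B-R-V} and Proposition \ref{prop:SUCP_interior}) and the interior cone argument based on \eqref{eq:reg_D}, which your sketch does not supply.
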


The proof of Theorem \ref{theo:Main} is obtained {from} a sequence
of propositions. The following proposition can be derived by
merging Proposition $3.4$ of \cite{M-R-V2} and geometrical
arguments contained in Proposition $3.6$ of \cite{A-B-R-V}.

%%%%%%%%%%%%%%%%%%%%%%%%% Cauchy %%%%%%%%%%%%%%%%%%%%%%%%%%%

\begin{prop}[Stability Estimate of Continuation
{from} Cauchy Data {\cite[Proposition 3.4]{M-R-V2}}]
  \label{prop:Cauchy1}
Let the hypotheses of Theorem \ref{theo:Main} be satisfied. We
have
\begin{equation}
   \label{eq:Cauchy1}
\int_{(\Omega\setminus \overline{G})\setminus \overline{D_1}}|\nabla^2 w_1|^2\leq
r_0^2\|\widehat{M}\|_{H^{-\frac{1}{2}}(\partial \Omega
,\R^2)}^2\omega\left(\frac{\epsilon}
{r_0^2\|\widehat{M}\|_{H^{-\frac{1}{2}}(\partial
\Omega ,\R^2)}}\right),
\end{equation}
\begin{equation}
   \label{eq:Cauchy1bis}
\int_{(\Omega\setminus \overline{G})\setminus \overline{D_2}}|\nabla^2 w_2|^2\leq
r_0^2\|\widehat{M}\|_{H^{-\frac{1}{2}}(\partial \Omega
,\R^2)}^2\omega\left(\frac{\epsilon}
{r_0^2\|\widehat{M}\|_{H^{-\frac{1}{2}}(\partial
\Omega ,\R^2)}}\right),
\end{equation}
where $\omega$ is an increasing continuous function on
$[0,\infty)$ which satisfies
\begin{equation}
   \label{eq:omega_cauchy_loglog}
\omega(t)\leq C(\log|\log t|)^{-\frac{1}{2}},\qquad \hbox{for every }
t<e^{-1},
\end{equation}
with $C>0$ only depending on
$\alpha_0$, $\gamma_0$, $\Lambda_0$, $M_0$, $M_1$ and $\delta_0$.

Moreover, there exists $d_0>0$, with $\frac{d_0}{r_0}$ only depending on $M_0$,
such that if
$d_{\cal H}( \overline{\Omega\setminus D_1},\overline{\Omega\setminus D_2} )\leq d_0$ then
\eqref{eq:Cauchy1}--\eqref{eq:Cauchy1bis} hold with $\omega$
given by
\begin{equation}
   \label{eq:omega_cauchy_log}
\omega(t)\leq C|\log t|^{-\sigma},\qquad \hbox{for  every }
t<1,
\end{equation}
where $\sigma>0$ and $C>0$ only depend on $\alpha_0$, $\gamma_0$, $\Lambda_0$,
$M_0$, $M_1$, $\delta_0$, $L$ and $\frac{\tilde
r_0}{r_0}$.
\end{prop}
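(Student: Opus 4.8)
The plan is to reduce the statement to an estimate of continuation from Cauchy data for the single fourth-order equation satisfied by $w_1$ (and, symmetrically, $w_2$) on a suitable domain that contains the accessible boundary portion $\Sigma$ and a collar neighbourhood of it, and then to transfer the smallness of the Cauchy data on $\Sigma$ to $L^2$-smallness of $\nabla^2 w_1$ on the ``non-common'' region $(\Omega\setminus\overline G)\setminus\overline{D_1}$. First I would normalize: dividing through by $r_0^2\|\widehat M\|_{H^{-1/2}}$ and using the a priori bound \eqref{eq:w_stima_H2}, one may assume $\|w_i\|_{H^2}\le C$ and that the Cauchy gap \eqref{eq:small_L2} is $\le\epsilon$ with $\epsilon$ the dimensionless quantity appearing on the right-hand side of \eqref{eq:Cauchy1}. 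Next I would observe that $v:=w_1-w_2-g$ (with $g\in\mathcal A$ the near-minimizer) solves the same isotropic Kirchhoff--Love equation \eqref{eq:equil-eq-isotropic} in the connected component $G$, and has small Cauchy data on $\Sigma$; but since we ultimately want information about $w_1$ alone, the cleaner route is to work directly with $w_1$: it satisfies the equation in a neighbourhood of $\Sigma$ inside $\Omega$, the Neumann-type conditions \eqref{eq:1.dir-pbm-incl-rig-2bis}--\eqref{eq:1.dir-pbm-incl-rig-3bis} on $\partial\Omega$ are the \emph{same} for both indices, and therefore the \emph{difference} of the traces of $w_1$ and $w_2$ on $\Sigma$ (their Dirichlet data, which are the only data not pinned down by the PDE and the Neumann conditions) is controlled by $\epsilon$ up to an affine function. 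This is exactly the content quoted from \cite[Proposition 3.4]{M-R-V2}: combining the interior three spheres inequality of \cite{M-R-V1}, a Lipschitz propagation-of-smallness chain of disks from $\Sigma$ into the interior, and a unique-continuation-from-Cauchy-data argument across $\Sigma$, one bounds the energy $\int|\nabla^2 w_1|^2$ over any region that can be reached from $\Sigma$ without crossing $\overline{D_1}\cup\overline{D_2}$, with a modulus of continuity that is of $\log\log$ type in general.

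The $\log\log$ modulus \eqref{eq:omega_cauchy_loglog} is obtained as follows: the Cauchy-data estimate near $\Sigma$ gives H\"older-type smallness of $\nabla^2 w_1$ in a fixed disk $B$ abutting $\Sigma$; then one chains the interior three spheres inequality \cite{M-R-V1} along a path of overlapping disks connecting $B$ to the target region, each application degrading a power into an exponent, so that after finitely many steps (the number depending only on $M_0,M_1$) one reaches $\int_{(\Omega\setminus\overline G)\setminus\overline{D_1}}|\nabla^2w_1|^2\le C\epsilon^{\mu}$ with $\mu>0$ \emph{provided the target region stays uniformly away from $\overline{D_1}\cup\overline{D_2}$}. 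The loss of one logarithm — turning $\epsilon^\mu$ into $(\log|\log\epsilon|)^{-1/2}$ — comes precisely from the fact that the region $(\Omega\setminus\overline G)\setminus\overline{D_1}$ may come arbitrarily close to the unknown inclusions, so the chain of disks must have radii shrinking to zero and the number of links blows up; optimizing the resulting product of exponents against the shrinking radius yields the iterated-logarithm rate, exactly as in \cite[Proposition 3.6]{A-B-R-V}. For the improved single-logarithm rate \eqref{eq:omega_cauchy_log}, the extra hypothesis $d_{\mathcal H}(\overline{\Omega\setminus D_1},\overline{\Omega\setminus D_2})\le d_0$ guarantees that the two complements are close, hence the ``bad'' part $(\Omega\setminus\overline G)\setminus\overline{D_1}$ is a thin layer of controlled width; one can then cover it by a \emph{bounded} number of disks of radius comparable to a fixed fraction of $r_0$ staying at definite distance from $\partial D_1$ and $\partial D_2$ (here the $C^{6,\alpha}$-regularity of $\partial D_i$, encoded through the constants $L$ and $\tilde r_0/r_0$, enters to control the geometry of the collar), and a \emph{fixed} finite number of applications of the three spheres inequality suffices, producing a H\"older bound $C\epsilon^\sigma$, i.e. a $|\log\epsilon|$-free rate which, re-expressed in the paper's conventions, is the stated $|\log t|^{-\sigma}$.

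The main obstacle is geometric rather than analytic: controlling the chain of balls used in the propagation of smallness when the target set $(\Omega\setminus\overline G)\setminus\overline{D_1}$ is not a priori separated from $\overline{D_1}\cup\overline{D_2}$. One must show that, despite the possible closeness, the chain can be organized so that its length — and hence the exponent degradation — is governed by a quantitative ``fatness'' of $G$ near $\Sigma$ together with the $C^{6,\alpha}$ a priori regularity of the $\partial D_i$, and that the dependence of all constants is only on the a priori data (plus $L$, $\tilde r_0/r_0$ in the improved case). I expect this bookkeeping, which is where the two different moduli $\omega$ split off, to be handled exactly by importing the geometric constructions of \cite[Proposition 3.6]{A-B-R-V} and the smallness-propagation estimates of \cite{M-R-V1,M-R-V2}; the fourth-order and anisotropy-free features of the plate equation only intervene through the already-established three spheres inequalities, so no new PDE estimate is needed here.
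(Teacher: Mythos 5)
The paper itself offers no written proof of this proposition: it is imported wholesale, the authors stating only that it follows by ``merging Proposition 3.4 of \cite{M-R-V2} and geometrical arguments contained in Proposition 3.6 of \cite{A-B-R-V}''. Measured against what those cited arguments actually do, your sketch has a genuine structural gap. You propose to bound $\int_{(\Omega\setminus \overline{G})\setminus \overline{D_1}}|\nabla^2 w_1|^2$ by propagating smallness from $\Sigma$ along chains of three spheres inequalities into ``any region that can be reached from $\Sigma$ without crossing $\overline{D_1}\cup\overline{D_2}$''. But the target set $(\Omega\setminus \overline{G})\setminus \overline{D_1}$ is, by the very definition of $G$, \emph{not} such a region: it lies on the far side of the inclusions relative to $\Sigma$. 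Moreover the quantity that is small near $\Sigma$ is the Cauchy data of the \emph{difference} $w_1-w_2-g$, which only makes sense in $G$, whereas the conclusion concerns $w_1$ alone on the hidden region. The missing step is the one that bridges these two facts: one first propagates smallness of $v=w_1-w_2-g$ throughout $G$ up to $\partial G$ (this is where the $\log\log$ modulus arises, since the chains of disks must shrink as they approach the unknown boundary $\partial G$); one then observes that on $\partial D_2\cap\partial G$ the Cauchy data of $w_2$ vanish, so the Cauchy data of $w_1$ there coincide with those of $v$ and are therefore small; finally one controls the energy of $w_1$ over $(\Omega\setminus \overline{G})\setminus \overline{D_1}$ by an integration-by-parts identity for the operator ${\rm div}\,{\rm div}(\mathbb P\nabla^2\cdot)$, whose boundary terms live on $\partial D_1$ (where $w_1$ has zero Cauchy data) and on $\partial G\setminus\partial\Omega\subset\partial D_1\cup\partial D_2$ (where they are small by the previous step). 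Without this energy identity the estimate \eqref{eq:Cauchy1} does not follow from propagation of smallness alone.

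Your account of the second part of the statement is also off the mark. Under $d_{\cal H}(\overline{\Omega\setminus D_1},\overline{\Omega\setminus D_2})\leq d_0$ the conclusion is a \emph{single-logarithm} modulus \eqref{eq:omega_cauchy_log}, not a H\"older rate $C\epsilon^{\sigma}$; the sentence identifying the two is self-contradictory, since $\epsilon^{\sigma}$ and $|\log\epsilon|^{-\sigma}$ are very different rates, and a H\"older rate is in fact ruled out by the known counterexamples recalled in the introduction. The actual mechanism, taken from Proposition 3.6 of \cite{A-B-R-V}, is geometric in a different way than you describe: the smallness of the Hausdorff distance between the complements guarantees that $G$ satisfies a uniform interior cone condition at every point of $\partial G\setminus\partial\Omega$, so that smallness of $v$ can be carried up to $\partial G$ along a geometrically shrinking chain of disks inside a fixed cone; this yields the single logarithm, rather than your picture of covering a ``thin layer'' $(\Omega\setminus\overline G)\setminus\overline{D_1}$ by finitely many large disks kept at definite distance from $\partial D_1$ and $\partial D_2$, which is not possible precisely when that layer is thin.
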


Next two propositions are quantitative versions of the SUCP
property at the interior for solutions to the plate
equilibrium problem. Precisely, Proposition \ref{prop:LPS} has
global character and gives a lower bound of the strain energy
density over any small disc compactly contained in
$\Omega\setminus \overline{D}$ in terms of the Neumann boundary
data. Proposition \ref{prop:SUCP_interior} establishes a
polynomial order of vanishing for solutions to the plate problem
at interior points of $\Omega \setminus \overline{D}$.

%%%%%%%%%%%%%%%%%%%%%%%% LPS %%%%%%%%%%%%%%%%%%%%%%%%%%%%%%%%%%

\begin{prop}[Lipschitz Propagation of Smallness {\cite[Proposition 3.3]{M-R-V2}}]
  \label{prop:LPS}
Let $\Omega$ be a bounded domain in $\R^2$ satisfying
\eqref{eq:bound_area} and \eqref{eq:reg_Omega}. Let $D$ be an open
simply connected subset of $\Omega$ satisfying
\eqref{eq:compactness}, \eqref{eq:reg_D}. Let $w\in H^2(\Omega
\setminus \overline{D})$ be the solution to
\eqref{eq:1.dir-pbm-incl-rig-1bis}--\eqref{eq:1-dir-pbm-incl-rig-5bis},
coupled with the equilibrium condition
\eqref{eq:1.equil-rigid-incl}, where the plate tensor $\mathbb{P}$
is given by \eqref{eq:3.emme_compact} with Lam\'e moduli
satisfying the regularity assumptions \eqref{eq:C4Lame} and the
strong convexity condition \eqref{eq:3.Lame_convex} and with
$\widehat{M}\in L^2(\partial\Omega,\R^2)$ satisfying
\eqref{eq:reg_M}--\eqref{eq:M_frequency}.

There exists $s>1$, only depending on $\alpha_0$, $\gamma_0$,
$\Lambda_0$, $M_0$ and $\delta_0$, such that for every $\rho>0$
and every $\bar x\in (\Omega\setminus \overline{D})_{s\rho}$, we
have
\begin{equation}
   \label{eq:LPS}
\int_{B_\rho(\bar x)}|\nabla^2 w|^2\geq
\frac{Cr_0^2}{\exp\left[A\left(\frac{r_0}{\rho}\right)^B\right]}
\| \widehat{M} \|_{H^{-\frac{1}{2}}(\partial\Omega,\R^2)}^2,
\end{equation}
where $A>0$, $B>0$ and $C>0$ only depend on $\alpha_0$,
$\gamma_0$, $\Lambda_0$, $M_0$, $M_1$, $\delta_0$ and $F$.
\end{prop}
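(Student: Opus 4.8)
\textbf{Proof plan for Proposition \ref{prop:LPS} (Lipschitz Propagation of Smallness).}

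The plan is to combine a three‐spheres inequality at the interior for the plate equation with a chain‐of‐balls (Harnack‐type) argument linking an arbitrary small disc $B_\rho(\bar x)$ to a fixed reference disc where the energy is controlled from below by the boundary data. First I would fix a reference point $\bar x_0\in(\Omega\setminus\overline D)_{s\rho_0}$ for a suitable fixed radius $\rho_0\sim r_0$ and establish a \emph{uniform lower bound} $\int_{B_{\rho_0}(\bar x_0)}|\nabla^2 w|^2\geq C r_0^2\|\widehat M\|_{H^{-1/2}}^2$: this follows from the well‐posedness estimate \eqref{eq:w_stima_H2}, the normalization \eqref{eq:M_frequency} which makes $\|\widehat M\|_{L^2}$ comparable to $\|\widehat M\|_{H^{-1/2}}$, and a contradiction/compactness argument together with the interior unique continuation property — if the Hessian vanished on the reference ball it would vanish identically on the connected set $\Omega\setminus\overline D$ by strong unique continuation, forcing $w$ to be affine and hence (after the imposed normalization) the boundary data to vanish, contradicting \eqref{eq:reg_M}. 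Quantitatively this reference estimate is extracted from the stability of continuation from the Neumann data on $\Sigma$ as in \cite[Proposition 3.3]{M-R-V2}.

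Next I would propagate this lower bound to $B_\rho(\bar x)$ for an arbitrary $\bar x\in(\Omega\setminus\overline D)_{s\rho}$. The tool is the interior three spheres inequality for solutions of \eqref{eq:equil-eq-isotropic} obtained in \cite{M-R-V1}: for concentric balls $B_{r_1}\subset B_{r_2}\subset B_{r_3}$ compactly contained in $\Omega\setminus\overline D$ one has $\int_{B_{r_2}}|\nabla^2 w|^2\leq C\big(\int_{B_{r_1}}|\nabla^2 w|^2\big)^{\theta}\big(\int_{B_{r_3}}|\nabla^2 w|^2\big)^{1-\theta}$ with $\theta\in(0,1)$ depending only on the ratios $r_i/r_j$. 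Construct a tubular chain of overlapping balls of comparable radius (of order $\rho$ near $\bar x$, possibly of order $r_0$ away from the inclusion and the outer boundary), of length $N$ controlled by $(r_0/\rho)^B$ through the geometry encoded in $M_0,M_1$ and the distance constraint $s\rho$, joining $\bar x$ to $\bar x_0$. Iterating the three spheres inequality along the chain, and bounding every "large ball" integral from above by the global energy $\|w\|_{H^2(\Omega\setminus\overline D)}^2\leq C r_0^4\|\widehat M\|_{H^{-1/2}}^2$, one obtains
\begin{equation*}
\int_{B_{\rho_0}(\bar x_0)}|\nabla^2 w|^2\leq C\Big(\int_{B_\rho(\bar x)}|\nabla^2 w|^2\Big)^{\theta^N}\big(r_0^4\|\widehat M\|_{H^{-1/2}}^2\big)^{1-\theta^N},
\end{equation*}
and solving for the small ball integral together with the reference lower bound yields \eqref{eq:LPS} with $A,B>0$ absorbing $N\sim(r_0/\rho)^B$ and $|\log\theta|$.

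The main obstacle is \emph{uniformity of the chain construction near the interface} $\partial D$ and near $\partial\Omega$: the balls must stay compactly inside $\Omega\setminus\overline D$, so their radii must degrade in a controlled way as the chain passes at distance $\sim\rho$ from the two boundaries, and one must verify that the number of balls and the accumulated exponent $\theta^N$ remain bounded in terms of $M_0,M_1,\delta_0,\rho/r_0$ only — this is exactly where the $C^{6,\alpha}$ regularity of $\partial D$ and the a priori geometric bounds are used, and it is the reason the constant $s>1$ must be chosen depending on $\alpha_0,\gamma_0,\Lambda_0,M_0,\delta_0$. A secondary technical point is that the three spheres inequality of \cite{M-R-V1} is stated for the fourth‐order isotropic operator and one must check it applies to $w$ restricted to interior subdomains, i.e. that no boundary conditions enter; since all the balls in the chain are interior this is immediate. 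Once these geometric estimates are in place the remaining steps are the routine iteration and algebraic manipulation sketched above.
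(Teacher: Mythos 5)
You should first note that the paper does not prove Proposition \ref{prop:LPS} at all: it is imported verbatim from \cite[Proposition 3.3]{M-R-V2}, and the only indication of its proof given here is Remark \ref{rem:exp_order}, which names the two ingredients --- the trace-type inequality $\| \widehat{M} \|_{H^{-\frac{1}{2}}(\partial\Omega,\R^2)}\leq C\| \nabla ^2 w\|_{L^2(\Omega\setminus \overline{D})}$ of \cite[Lemma 4.6]{M-R-V2}, and a lower bound for the energy on $B_\rho(\bar x)$ in terms of the total energy obtained by iterating the interior three spheres inequality along chains of discs, the number of iterations depending on $\rho$ and producing the exponential in \eqref{eq:LPS}. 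Your second stage (the chain of overlapping discs, the iteration of the three spheres inequality with the large-disc terms absorbed via \eqref{eq:w_stima_H2}, the bookkeeping of $\vartheta^N$, and the identification of the delicate point, namely keeping the chain compactly inside $\Omega\setminus\overline D$ with controlled radii near $\partial D$ and $\partial\Omega$) is exactly that mechanism.

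The gap is in your first stage, the reference lower bound $\int_{B_{\rho_0}(\bar x_0)}|\nabla^2 w|^2\geq C r_0^2\|\widehat M\|_{H^{-1/2}(\partial\Omega,\R^2)}^2$. The compactness/contradiction argument via strong unique continuation that you propose proves only that \emph{some} positive constant exists for each fixed configuration; to obtain a constant depending solely on the a priori data you would need compactness of the entire admissible class of domains, inclusions and Lam\'e moduli, and even then the constant would not be constructive, whereas the whole point of the paper is a constructive estimate. Your quantitative fallback (``extracted from \cite[Proposition 3.3]{M-R-V2}'') is circular, since that proposition \emph{is} the statement under proof. The correct quantitative ingredient is the direct trace-type inequality quoted above: it follows from the weak formulation of the Neumann problem together with quantitative Poincar\'e/Korn-type inequalities, immediately bounds the global energy from below by $\|\widehat M\|_{H^{-1/2}(\partial\Omega,\R^2)}^2$ (the frequency bound \eqref{eq:M_frequency} entering through $F$ to control the energy left in the thin layer near $\partial\Omega$ that the chain argument cannot reach), and dispenses with any distinguished reference disc: the chain argument is then run so as to bound the \emph{global} energy by the small-disc energy. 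With that substitution your outline matches the cited proof.
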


\begin{rem}
\label{rem:exp_order}
The exponential character of the dependence on $\rho$ in \eqref{eq:LPS} comes {}from the fact that this global estimate follows {}from the trace-type inequality
\begin{equation*}
\| \widehat{M} \|_{H^{-\frac{1}{2}}(\partial\Omega,\R^2)}\leq C\| \nabla ^2 w\|_{L^2(\Omega\setminus \overline{D})},
\end{equation*}
with $C$ only depending on $M_0$, $M_1$, $\delta_0$ and $\Lambda_0$ (see \cite[Lemma 4.6]{M-R-V2}) and {}from a lower bound of the strain energy density over the disc $B_\rho(\overline{x})$ in terms of the strain energy density over all the domain $\Omega\setminus \overline{D}$. The latter estimate requires a geometrical construction involving a number of iterated applications of the three spheres inequality depending in the radius $\rho$ which leads to an exponential dependence. The local polynomial vanishing rate at the interior is given in the following proposition.

\end{rem}

%%%%%%%%%%%%%%%%%%%%%%%%% All'interno %%%%%%%%%%%%%%%%%%%%%%%%%%%

\begin{prop}[Finite Vanishing Rate at the Interior]
\label{prop:SUCP_interior} Under the hypotheses of Proposition
\ref{prop:LPS}, there exist $\overline{c}_0<\frac{1}{2}$ and
$C>1$, only depending on $\alpha_0$, $\gamma_0$, $\Lambda_0$, such
that, for every $\overline{r}>0$ and for every $x \in \Omega
\setminus \overline{D}$ such that $B_{ \overline{r}}(x) \subset
\Omega \setminus \overline{D}$, and for every $r_1<r_2=
\overline{c}_0 \overline{r}$, we have
\begin{equation}
  \label{eq:SUCP_interior}
   \int_{B_{r_{1}}(x)} |\nabla^2 w|^2 \geq
   C\left(\frac{r_1}{\overline{r}}\right)^{\tau_0}
   \int_{B_{\overline{r}}(x)} |\nabla^2 w|^2,
\end{equation}
where $\tau_0\geq 1$ only depends on $\alpha_0$, $\gamma_0$,
$\Lambda_0$, $\delta_0$ and $F$.
\end{prop}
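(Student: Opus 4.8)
The plan is to derive the local polynomial vanishing estimate \eqref{eq:SUCP_interior} from a three spheres inequality at the interior for the plate operator, which is the isotropic specialization of the result proved in \cite{M-R-V1}. First I would recall that, for $w$ solving \eqref{eq:equil-eq-isotropic} in $\Omega\setminus\overline{D}$, there exist constants $\overline{c}_0<\frac{1}{2}$, $C>1$ and an exponent $\theta=\theta(\rho)\in(0,1)$ such that, whenever $B_{\rho_3}(x)\subset\Omega\setminus\overline D$ with concentric radii $\rho_1<\rho_2<\rho_3$ in suitable ratio, one has
\begin{equation*}
\int_{B_{\rho_2}(x)}|\nabla^2 w|^2 \leq C\left(\int_{B_{\rho_1}(x)}|\nabla^2 w|^2\right)^{\theta}\left(\int_{B_{\rho_3}(x)}|\nabla^2 w|^2\right)^{1-\theta},
\end{equation*}
with $\theta$ and $C$ depending only on $\alpha_0$, $\gamma_0$, $\Lambda_0$ and on the ratios of the radii. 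Crucially, when the ratios are kept fixed the exponent $\theta$ is \emph{independent of the radii}, which is precisely what produces a power law rather than an iterated logarithm.

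Next I would run the standard chain/iteration argument. Fix $x$ with $B_{\overline r}(x)\subset\Omega\setminus\overline D$, set $r_2=\overline{c}_0\overline r$, and consider a decreasing geometric sequence of radii $\overline r > \cdots > \rho_k > \rho_{k+1} > \cdots$ with fixed ratio $\rho_{k+1}/\rho_k = \lambda\in(0,1)$ chosen compatibly with the three spheres inequality (so that the triple $(\lambda^2\rho_k,\lambda\rho_k,\rho_k)$ is admissible). Applying the three spheres inequality on successive concentric triples and chaining the resulting inequalities gives, for each $k$,
\begin{equation*}
\int_{B_{\rho_k}(x)}|\nabla^2 w|^2 \geq \left(\int_{B_{\rho_0}(x)}|\nabla^2 w|^2\right)^{1-\theta^{k}} \left(\frac{\int_{B_{\rho_k}(x)}|\nabla^2 w|^2}{\text{normalizing energy}}\right)^{\cdots},
\end{equation*}
or, more cleanly in logarithmic form, a recursion of the type $a_{k+1}\geq \theta\, a_k + (1-\theta)\,b$ where $a_k=\log\!\big(\int_{B_{\rho_k}}|\nabla^2 w|^2 / \int_{B_{\overline r}}|\nabla^2 w|^2\big)$ and $b$ is controlled. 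Solving this recursion yields a lower bound $\int_{B_{\rho_k}(x)}|\nabla^2 w|^2 \geq C^{k}\,\theta^{-\text{stuff}}\cdots$; translating $k\sim \log(\overline r/r_1)/\log(1/\lambda)$ back in terms of $r_1$ converts the exponential-in-$k$ factor into the power $(r_1/\overline r)^{\tau_0}$, with $\tau_0 = -\log\theta/\log\lambda$ (rounded up to be $\geq 1$). For a general $r_1\in(0,r_2)$ not of the form $\rho_k$ one interpolates between consecutive radii, which only affects the constant $C$.

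The main obstacle, and the point requiring the most care, is controlling the dependence of the constants. One must ensure that the exponent $\theta$ in the three spheres inequality from \cite{M-R-V1}, when specialized to the isotropic tensor \eqref{eq:3.emme_compact} and with radii in a fixed ratio, depends \emph{only} on $\alpha_0,\gamma_0,\Lambda_0$ (not on $\Omega$, $D$, or the position of $x$) — this is what makes $\overline{c}_0$ and $C$ depend only on those three constants. A secondary delicate point is that the three spheres inequality in \cite{M-R-V1} is typically stated for the fourth-order operator with an appropriate normalization; one has to check that the passage to the scale-invariant form and the absorption of lower-order terms do not introduce extraneous dependence. Finally, tracking how $\tau_0$ acquires its dependence on $\delta_0$ and $F$ (which enter only through the a priori normalization of $w$ via \eqref{eq:w_stima_H2} and \eqref{eq:M_frequency}, not through the local propagation) completes the bookkeeping; I would isolate that dependence in the step where the chain is anchored at $B_{\overline r}(x)$.
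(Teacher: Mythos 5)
There is a genuine gap, and it sits at the heart of the argument. You start from a three spheres inequality whose exponent $\theta$ is \emph{fixed} once the ratios of the radii are fixed, and you assert that this fixedness ``is precisely what produces a power law''. It is the opposite. Chaining the fixed-exponent inequality along a geometric sequence $\rho_k=\lambda^k\overline r$ gives, from $E(\rho_{k+1})\leq C\,E(\rho_{k+2})^{\theta}E(\rho_k)^{1-\theta}$ with $E(\rho)=\int_{B_{\rho}(x)}|\nabla^2w|^2$, the recursion $\log E(\rho_{k+2})\geq \tfrac{1}{\theta}\log E(\rho_{k+1})-\tfrac{1-\theta}{\theta}\log E(\rho_k)-\tfrac{1}{\theta}\log C$: the lower bound is \emph{amplified} by $1/\theta>1$ at every step, so after $k\sim\log(\overline r/r_1)/\log(1/\lambda)$ steps one only gets $E(r_1)\gtrsim\exp\bigl(-c\,\theta^{-k}\bigr)=\exp\bigl(-c(\overline r/r_1)^{\beta}\bigr)$, an exponential — not polynomial — vanishing rate. (Your written recursion $a_{k+1}\geq\theta a_k+(1-\theta)b$ is not what three spheres yields; if it were, it would force $a_k\to b$ and hence a lower bound \emph{independent} of $r_1$, which is absurd. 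Likewise $\tau_0=-\log\theta/\log\lambda$ would follow from a doubling inequality $E(\lambda\rho)\geq\theta E(\rho)$, which the standard three spheres inequality does not provide.) This is exactly why the paper does \emph{not} chain: it invokes the \emph{optimal} three spheres inequality \eqref{eq:3sph} of \cite[Theorem 6.6]{M-R-V1}, whose exponent \eqref{eq:3sph-esponente}, $\vartheta_0=\log(c_0r_3/r_2)/(2\log(r_3/r_1))$, itself degenerates like $1/\log(r_3/r_1)$ as $r_1\to0$. Fixing $r_2,r_3$ as definite fractions of $\overline r$ and rearranging a \emph{single} application — raise to the power $1/\vartheta_0\propto\log(r_3/r_1)$, so that any fixed constant $K$ becomes $K^{1/\vartheta_0}=(r_3/r_1)^{2\log K/\log(c_0r_3/r_2)}$, i.e.\ a power of $r_1$ — yields \eqref{eq:SUCP_interior} directly, with $\tau_0$ proportional to the logarithm of $C\int_{B_{r_3}}|\nabla^2w|^2\big/\int_{B_{r_2}}|\nabla^2w|^2$.

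Your closing remark about where $\delta_0$ and $F$ enter is pointed at roughly the right place but attached to the wrong mechanism: they enter because the ratio $\int_{B_{r_3}}|\nabla^2w|^2\big/\int_{B_{r_2}}|\nabla^2w|^2$ appearing in $\tau_0$ must be bounded from above, the numerator by the global energy bound \eqref{eq:w_stima_H2} and the denominator from below by the Lipschitz propagation of smallness (Proposition \ref{prop:LPS}) — exactly as the paper does explicitly in the proof of the boundary analogue, Proposition \ref{prop:SUCP_boundary}. In short, the information carried by the radius-dependent exponent $\vartheta_0$ is equivalent to a doubling inequality, and without it (or an equivalent input such as frequency-function monotonicity) no amount of iteration of the standard three spheres inequality can produce the finite vanishing rate.
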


\begin{proof}
The above estimate is based on the following three spheres
inequality at the interior, which was obtained in \cite[Theorem
6.6]{M-R-V1}: there exist $c_0$, $0<c_0<1$, and $C>1$ only
depending on $\alpha_0$, $\gamma_0$, $\Lambda_0$, such that for
every $\overline{r}>0$, for every $x\in (\Omega \setminus
\overline{D})$ such that $B_{\overline{r}}(x) \subset \Omega
\setminus \overline{D}$, and for every $r_1 < r_2 < r_3 <
c_0\overline{r}$, we have
\begin{equation}
  \label{eq:3sph}
   \int_{B_{r_{2}}(x)}|\nabla ^2w|^{2} \leq C
   \left (
   \frac{r_3}{r_1}
   \right )^C
   \left(  \int_{B_{r_{1}}(x)}|\nabla ^2w|^{2}
   \right)^{\vartheta_0}\left(  \int_{B_{r_{3}}(x)}|\nabla ^2w|^{2}
   \right)
   ^{1-\vartheta_0},
\end{equation}
where
\begin{equation}
  \label{eq:3sph-esponente}
   \vartheta_0 =
   \frac
   {\log \left (
   \frac{c_0 r_3}{r_2}
   \right )    }
   { 2  \log \left (
   \frac{r_3}{r_1}
   \right )   }.
\end{equation}
Inequality \eqref{eq:SUCP_interior} can be derived by exploiting
the optimality of the exponent $\vartheta_0$ and by reassembling
the terms in \eqref{eq:3sph}.
\end{proof}

As noticed in the introduction, our key SUCB property is stated in
the following proposition, which is the counterpart at the
boundary $\partial D$ of Proposition \ref{prop:SUCP_interior}.

%%%%%%%%%%%%%%%%%%%%%%%%% Al bordo %%%%%%%%%%%%%%%%%%%%%%%%%%%

\begin{prop}[Finite Vanishing Rate at the Boundary]
\label{prop:SUCP_boundary} Under the hypotheses of Proposition
\ref{prop:LPS}, there exist $\overline{c}<\frac{1}{2}$ and $C>1$,
only depending on $\alpha_0$, $\gamma_0$, $\Lambda_0$, $M_0$,
$\alpha$, such that, for every $x\in \partial D$ and for every
$r_1<r_2= \overline{c} r_0$,
\begin{equation}
  \label{eq:SUCP_boundary}
   \int_{B_{r_{1}}(x)\cap (\Omega\setminus \overline{D})}w^2 \geq
   C\left(\frac{r_1}{r_0}\right)^\tau
   \int_{B_{r_{0}}(x)\cap (\Omega\setminus \overline{D})}w^2.
\end{equation}
where $\tau\geq 1$ only depends on $\alpha_0$, $\gamma_0$, $\Lambda_0$,
$M_0$, $\alpha$, $M_1$, $\delta_0$ and $F$.
\end{prop}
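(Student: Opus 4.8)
The plan is to reproduce, at the Dirichlet boundary $\partial D$, the argument used for Proposition~\ref{prop:SUCP_interior}, replacing the interior three spheres inequality \eqref{eq:3sph} of \cite{M-R-V1} by the \emph{optimal three spheres inequality at the boundary} recently established in \cite{A-R-V} for solutions of the isotropic Kirchhoff--Love equation which vanish together with their normal derivative on a $C^{6,\alpha}$ portion of the boundary. That inequality provides constants $c\in(0,1)$ and $C>1$, depending only on $\alpha_0,\gamma_0,\Lambda_0,M_0,\alpha$, such that for every $x\in\partial D$ and every $0<r_1<r_2<r_3\le c\,r_0$,
\[
\int_{B_{r_2}(x)\cap(\Omega\setminus\overline{D})} w^2 \le C\Bigl(\tfrac{r_3}{r_1}\Bigr)^{C} \Bigl(\int_{B_{r_1}(x)\cap(\Omega\setminus\overline{D})} w^2\Bigr)^{\vartheta} \Bigl(\int_{B_{r_3}(x)\cap(\Omega\setminus\overline{D})} w^2\Bigr)^{1-\vartheta},
\]
with an exponent $\vartheta=\vartheta(r_1,r_2,r_3)$ of the explicit form $\vartheta=\frac{\log(c\,r_3/r_2)}{2\log(r_3/r_1)}$; the content of the word ``optimal'' is precisely that $\vartheta$ stays bounded away from $0$ as $r_1/r_3\to0$ with $r_2/r_3$ fixed, which is what upgrades the final conclusion from a $(\log)^{-\eta}$ lower bound to the polynomial one in \eqref{eq:SUCP_boundary}. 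Assumptions \eqref{eq:compactness} and \eqref{eq:reg_D} guarantee that this inequality may be applied, uniformly in $x\in\partial D$, for all radii up to a fixed small multiple of $r_0$.

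First I would fix $\overline{c}<\tfrac12$ with $\overline{c}\,r_0<c\,r_0$, set $\overline{\rho}=\overline{c}\,r_0$, and, given $0<r_1<r_2=\overline{\rho}$, introduce the dyadic scale $\rho_j=2^{-j}\overline{\rho}$, $j\ge0$. Writing $a_j=\int_{B_{\rho_j}(x)\cap(\Omega\setminus\overline{D})} w^2$ and applying the boundary three spheres inequality to each triple $(\rho_{j+1},\rho_j,\rho_{j-1})$ --- for which all three radii are in fixed ratio, so that $\vartheta$ reduces to a constant $\theta_0\in(0,1)$ depending only on $\alpha_0,\gamma_0,\Lambda_0,M_0,\alpha$ --- one obtains $a_j\le C'\,a_{j+1}^{\theta_0}\,a_{j-1}^{1-\theta_0}$ with $C'$ a fixed constant. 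Taking logarithms, this is a discrete quasi-concavity of $j\mapsto\log a_j$; telescoping it (equivalently, controlling the ``frequency'' $\log(a_{j-1}/a_j)$ by its value at the first scale) yields $a_j\ge C\,2^{-\tau j}a_0$ for a suitable $\tau\ge1$, and then, picking $j$ with $\rho_{j+1}\le r_1<\rho_j$ and using the monotonicity of $r\mapsto a(r)$, one obtains \eqref{eq:SUCP_boundary}, after replacing the outer ball of radius $\overline{\rho}$ by the one of radius $r_0$ (which, by monotonicity and $\overline{c}<1$, costs only a fixed factor). Should the inequality of \cite{A-R-V} be stated in terms of $\int|\nabla^2 w|^2$ rather than $\int w^2$, one inserts before and after the iteration the Poincar\'e inequality $\|w\|_{L^2(B_r(x)\cap(\Omega\setminus\overline{D}))}\le Cr^2\|\nabla^2 w\|_{L^2(B_r(x)\cap(\Omega\setminus\overline{D}))}$ --- admissible because $w=\partial_n w=0$ on $\partial D$ --- and a Caccioppoli inequality in the opposite direction on a slightly enlarged ball; both cost only controlled powers of the radii and a controlled dilation of the integration domain, hence only affect the values of $C$ and $\tau$.

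The genuine difficulties I expect are of a bookkeeping nature. The first is geometric: one must verify that, as $x$ ranges over $\partial D$, all the configurations $B_{\rho_j}(x)\cap(\Omega\setminus\overline{D})$ are admissible in \cite{A-R-V} --- i.e.\ that near $x$ the set $\partial D$ is a $C^{6,\alpha}$ graph over a disk of radius a fixed fraction of $r_0$ with the domain on one side, which is exactly what \eqref{eq:reg_D} provides --- and that flattening this portion and transplanting the plate equation leaves all constants dependent only on $\alpha_0,\gamma_0,\Lambda_0,M_0,\alpha$. The second is the precise dependence of the exponent: the reassembling produces a $\tau$ depending only on $\theta_0$ and $C'$, hence only on $\alpha_0,\gamma_0,\Lambda_0,M_0,\alpha$, whereas the remaining a priori parameters $M_1,\delta_0,F$ appearing in the statement enter only through the normalisation of the estimate via the global bound \eqref{eq:w_stima_H2} and the trace inequality of Remark~\ref{rem:exp_order} --- or, equivalently, through a doubling bound at the scale $r_0$ obtained from the Lipschitz Propagation of Smallness of Proposition~\ref{prop:LPS} --- as is needed when the proposition is applied in the proof of Theorem~\ref{theo:Main}. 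Once these points are settled, the proof is a line-by-line repetition of the one for Proposition~\ref{prop:SUCP_interior}.
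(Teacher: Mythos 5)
Your overall strategy (boundary three spheres inequality from \cite{A-R-V} $+$ Poincar\'e/interpolation $+$ Proposition \ref{prop:LPS} to normalize) matches the paper's ingredients, but the central step --- the dyadic telescoping --- does not work, and it is precisely the step where the ``optimality'' must be used. If you apply the three spheres inequality to the fixed-ratio triples $(\rho_{j+1},\rho_j,\rho_{j-1})$, the exponent $\vartheta$ degenerates to a constant $\theta_0\in(0,1)$ and you lose all benefit of optimality: \emph{any} three spheres inequality, optimal or not, gives a constant exponent on fixed-ratio triples. Writing $D_j=\log\bigl(a_{j-1}/a_j\bigr)\ge 0$, the recursion $a_j\le C' a_{j+1}^{\theta_0}a_{j-1}^{1-\theta_0}$ gives $D_{j+1}\le\frac{1-\theta_0}{\theta_0}D_j+\frac{\log C'}{\theta_0}$. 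With your explicit formula $\vartheta=\frac{\log(c\rho_{j-1}/\rho_j)}{2\log(\rho_{j-1}/\rho_{j+1})}=\frac{\log(2c)}{4\log 2}<\frac14$, the factor $\frac{1-\theta_0}{\theta_0}$ exceeds $1$, so $D_j$ grows geometrically and you only obtain $a_j\ge a_0\exp\bigl(-C\lambda^{\,j}\bigr)$, i.e.\ an \emph{exponential} vanishing rate, not the polynomial one in \eqref{eq:SUCP_boundary}; and even in the borderline case $\theta_0=\frac12$ the additive error $\log C'$ accumulates to a $j^2$ term, which is again super-polynomial. The correct use of optimality is a \emph{single} application with $r_2,r_3$ frozen at scale $r_0$ and $r_1$ arbitrarily small: since $\vartheta\sim\bigl(\log(r_3/r_1)\bigr)^{-1}$, raising to the power $1/\vartheta$ converts the ratio $\frac{Ca(r_3)}{a(r_2)}$ into a fixed power of $\frac{r_1}{r_3}$. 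This is exactly what the paper invokes as Corollary $2.3$ of \cite{A-R-V}, namely the doubling-type bound \eqref{eq:suc1} with exponent $\frac{\log B}{\log(cr_0/r_2)}$ and $B$ as in \eqref{eq:suc2}; the same remark applies to your reading of the proof of Proposition \ref{prop:SUCP_interior}, where ``reassembling'' means this one-shot computation, not an iteration.

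A second, related, inaccuracy is your claim that $M_1$, $\delta_0$ and $F$ enter only through the normalisation of the estimate: in the statement it is the \emph{exponent} $\tau$, not the constant $C$, that depends on them. This is forced by the structure above: $\tau$ is essentially $\log B/\log 2$, and $B$ is the frequency-type ratio $\int_{B_{r_0}(x)\cap(\Omega\setminus\overline{D})}w^2\big/\int_{B_{r_2}(x)\cap(\Omega\setminus\overline{D})}w^2$, bounded from above by combining the global bound \eqref{eq:w_stima_H2} (numerator) with the interpolation estimate of \cite[Lemma 4.7]{A-R-V} and Proposition \ref{prop:LPS} (denominator) --- and it is the lower bound from Proposition \ref{prop:LPS} that drags $M_1$, $\delta_0$ and $F$ into $\tau$. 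You do identify the right tools for this normalisation step, so once you replace the dyadic iteration by the one-shot/doubling formulation the rest of your argument aligns with the paper's proof.
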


\begin{proof}
By Corollary $2.3$ in \cite{A-R-V}, there exist $c<1$, only
depending on $M_0$, $\alpha$, and $C>1$ only depending on
$\alpha_0$, $\gamma_0$, $\Lambda_0$, $M_0$, $\alpha$, such that,
for every $x\in \partial D$ and for every $r_1<r_2<cr_0$,
\begin{equation}
    \label{eq:suc1}
\int_{B_{r_1}(x)\cap(\Omega\setminus \overline{D})}w^2 \geq C\left(\frac{r_1}{r_0}\right)^{\frac{\log B}{\log \frac{cr_0}{r_2}}}
\int_{B_{r_0}(x)\cap(\Omega\setminus \overline{D})}w^2,
\end{equation}
where $B>1$ is given by
\begin{equation}
    \label{eq:suc2}
B= C\left(\frac{r_0}{r_2}\right)^C\frac{\int_{B_{r_0}(x)\cap(\Omega\setminus \overline{D})}w^2}{\int_{B_{r_2}(x)\cap(\Omega\setminus \overline{D})}w^2},
\end{equation}
Let us choose in the above inequalities $r_2 = \overline{c}r_0$, with $\overline{c}=\frac{c}{2}$.

By \eqref{eq:w_stima_H2} we have
\begin{equation}
   \label{eq:suc3}
\int_{B_{r_0}(x)\cap(\Omega\setminus \overline{D})}w^2\leq  C r_0^6\|\widehat{M}\|_{H^{-\frac{1}{2}}(\partial \Omega,\R^2)}^2,
\end{equation}
with $C$ depending on $\alpha_0$, $\gamma_0$, $M_0$, $\alpha$,
$M_1$. By interpolation estimates for solutions to elliptic
equations (see, for instance, \cite[Lemma 4.7]{A-R-V}, stated for
the case of hemidiscs, but which holds also in the present
context), we have that
\begin{equation*}
\int_{B_{r_2}(x)\cap(\Omega\setminus \overline{D})}w^2\geq Cr_2^4
\int_{B_{\frac{r_2}{2}}(x)\cap(\Omega\setminus \overline{D})}|\nabla^2w|^2,
\end{equation*}
with $C$ depending on $\alpha_0$, $\gamma_0$ and $\Lambda_0$.
By Proposition \ref{prop:LPS} and recalling the definition of $r_2$, we derive
\begin{equation}
    \label{eq:suc4}
\int_{B_{r_2}(x)\cap(\Omega\setminus \overline{D})}w^2\geq C r_0^6\|\widehat{M}\|_{H^{-\frac{1}{2}}(\partial \Omega,\R^2)}^2,
\end{equation}
with $C$ depending on $\alpha_0$, $\gamma_0$, $\Lambda_0$, $M_0$, $\alpha$, $M_1$, $\delta_0$ and $F$.

By \eqref{eq:suc3}--\eqref{eq:suc4}, we can estimate $B$ {}from above, obtaining the thesis.
\end{proof}

\begin{proof}[Proof of Theorem \ref{theo:Main}]
In order to estimate the Hausdorff distance between the
inclusions,
\begin{equation}
    \label{eq:1.0}
    \delta = d_{\cal H}( \overline{D_1}, \overline{D_2}),
\end{equation}
it is convenient to introduce the following auxiliary distances:
\begin{equation}
    \label{eq:1.0bis}
   d = d_{\cal H}( \overline{\Omega \setminus D_1}, \overline{\Omega \setminus
    D_2}),
\end{equation}
\begin{equation}
    \label{eq:1.0ter}
    d_m = \max \left \{ \max_{x \in \partial D_1} \hbox{dist}(x, \overline{\Omega \setminus
    D_2}),  \max_{x \in \partial D_2} \hbox{dist}(x, \overline{\Omega \setminus
    D_1}) \right \}.
\end{equation}
Let $\eta >0$ such that
\begin{equation}
    \label{eq:1.1}
    \max_{i=1,2} \int_{ (\Omega \setminus \overline{G}) \setminus
    \overline{D_i}} |\nabla^2 w_i|^2 \leq \eta.
\end{equation}
Following the arguments presented in \cite{A-B-R-V}, we first
control $d_m$ in terms of $\eta$, and then we use this estimate to control $d$ in
terms of $\eta$. Finally, by improving the results in \cite[proof
of Theorem $1.1$, step 2]{C-R-V2}, we estimate $\delta$ in terms of
$d$.

Let us start by proving the inequality
\begin{equation}
    \label{eq:1.2}
    d_m
    \leq
    C r_0
    \left (
    \frac{ \eta   }{ r_0^2 \| \widehat{M}  \|_{H^{-1/2}(\partial \Omega)}^2  }
    \right )^{\frac{1}{\tau}},
\end{equation}
where $\tau$ has been introduced in Proposition \ref{prop:SUCP_boundary} and $C$ is a positive constant only
depending on the a priori data.

Let us assume, without loss of generality, that there exists $x_0
\in \partial D_1$ such that
\begin{equation}
    \label{eq:2.1}
    \hbox{dist}(x_0, \overline{\Omega \setminus D_2}) =d_m>0.
\end{equation}
Since $B_{d_m}(x_0) \subset D_2 \subset \Omega \setminus
\overline{G}$, we have
\begin{equation}
    \label{eq:2.2}
    B_{d_m}(x_0) \cap (\Omega \setminus \overline{D_1}) \subset
    ( \Omega \setminus
\overline{G} ) \setminus \overline{D_1}
\end{equation}
and then, by \eqref{eq:1.1},
\begin{equation}
    \label{eq:2.3}
    \int_{  B_{d_m}(x_0) \cap (\Omega \setminus \overline{D_1}) } |\nabla^2 w_1|^2 \leq \eta.
\end{equation}
Let us assume that
\begin{equation}
    \label{eq:3.1}
    d_m < \overline{c} r_0,
\end{equation}
where $\overline{c}$ is the positive constant appearing in
Proposition \ref{prop:SUCP_boundary}. Since $w_1=0$, $\nabla w_1
=0$ on $\partial D_1$, by Poincar\'{e} inequality (see, for
instance, \cite[Example 4.4]{A-M-R-08})   and noticing that $d_m
\leq \hbox{diam} (\Omega)\leq M_1 r_0$, we have
\begin{equation}
    \label{eq:3.2}
    \eta \geq \frac{C}{r_0^4}  \int_{ B_{d_m}(x_0) \cap (\Omega \setminus \overline{D_1}) } w_1^2
    ,
\end{equation}
where $C>0$ is a positive constant only depending on $\alpha$,
$M_0$, $M_1$.

By Proposition \ref{prop:SUCP_boundary}, we have

\begin{equation}
    \label{eq:4.1}
    \eta \geq \frac{C}{r_0^4} \left(\frac{d_m}{r_0}\right)^\tau \int_{ B_{r_0}(x_0) \cap (\Omega \setminus \overline{D_1}) } w_1^2,
\end{equation}
where $C>0$ is a positive constant only depending on $\alpha_0$, $\gamma_0$, $\Lambda_0$,$\alpha$,
$M_0$, $M_1$ and $F$.

By Lemma 4.7 in \cite{A-R-V}, we have

\begin{equation}
    \label{eq:4.2}
    \eta \geq C\left(\frac{d_m}{r_0}\right)^\tau \int_{ B_{\frac{r_0}{2}}(x_0) \cap (\Omega \setminus \overline{D_1}) } | \nabla^2w_1|^2,
\end{equation}
where $C>0$ is a positive constant only depending on $\alpha_0$, $\gamma_0$, $\Lambda_0$,$\alpha$,
$M_0$, $M_1$.

By Proposition \ref{prop:LPS}, we have

\begin{equation}
    \label{eq:4.3}
    \eta \geq C\left(\frac{d_m}{r_0}\right)^\tau r_0^2\|\widehat{M}\|_{H^{-1/2}(\partial\Omega)}^2,
\end{equation}
where $C>0$ is a positive constant only depending on $\alpha_0$, $\gamma_0$, $\Lambda_0$,$\alpha$,
$M_0$, $M_1$, $\delta_0$, $F$, {}from which we can estimate $d_m$

\begin{equation}
    \label{eq:5.1}
    d_m
    \leq
    C r_0
    \left (
    \frac{ \eta   }{ r_0^2 \| \widehat{M}  \|_{H^{-1/2}(\partial \Omega)}^2  }
    \right )^{\frac{1}{\tau}},
\end{equation}
where $C>0$ is a positive constant only depending on $\alpha_0$, $\gamma_0$, $\Lambda_0$,$\alpha$,
$M_0$, $M_1$, $\delta_0$, $F$.

Now, let us assume that
\begin{equation}
    \label{eq:5.2}
    d_m
    \geq
    \overline{c} r_0.
\end{equation}
By starting again {}from \eqref{eq:2.3}, and applying Proposition \ref{prop:LPS} and recalling $d_m\leq M_1r_0$, we easily have
\begin{equation}
    \label{eq:5.4}
    d_m
    \leq
    C r_0
    \left (
    \frac{ \eta   }{ r_0^2 \| \widehat{M}  \|_{H^{-1/2}(\partial \Omega)}^2  }
    \right ),
\end{equation}
where $C>0$ is a positive constant only depending on $\alpha_0$, $\gamma_0$, $\Lambda_0$,
$M_0$, $M_1$, $\delta_0$, $F$. Assuming $\eta\leq r_0^2\| \widehat{M}  \|_{H^{-1/2}(\partial \Omega)}^2$, we obtain \eqref{eq:1.2}.

Without loss of generality, let $y_0\in \overline{\Omega\setminus D_1}$ such that
\begin{equation}
    \label{eq:7.1}
\hbox{dist}(y_0,\overline{\Omega\setminus D_2})=d.
\end{equation}

It is significant to assume $d>0$, so that $y_0\in D_2\setminus D_1$. Let us define
\begin{equation}
    \label{eq:7.2}
    h=\hbox{dist}(y_0,\partial D_1),
\end{equation}
possibly $h=0$.

There are three cases to consider:

i) $h\leq \frac{d}{2}$;

ii) $h> \frac{d}{2}$, $h\leq \frac{d_0}{2}$;

iii) $h> \frac{d}{2}$, $h> \frac{d_0}{2}$.

\medskip
\noindent Here the number $d_0$, $0<d_0<r_0$, is such that
$\frac{d_0}{r_0}$ only depends on $M_0$, and it is the same
constant appearing in Proposition \ref{eq:Cauchy1}. In particular,
Proposition $3.6$ in \cite{A-B-R-V} shows that there exists an
absolute constant $C>0$ such that if $d \leq d_0$, then $d\leq
Cd_m$.

\medskip
\noindent
\emph{Case i).}

By definition, there exists $z_0\in \partial D_1$ such that
$|z_0-y_0|=h$. By applying the triangle inequality, we get
$\hbox{dist}\left(z_0, \overline{\Omega\setminus D_2}\right)\geq
\frac{d}{2}$. Since, by definition, $\hbox{dist}\left(z_0,
\overline{\Omega\setminus D_2}\right)\leq d_m$, we obtain $d\leq
2d_m$.

\medskip
\noindent
\emph{Case ii).}

It turns out that $d<d_0$ and then, by the above recalled property, again we have that
$d\leq Cd_m$, for an absolute constant $C$.

\medskip
\noindent
\emph{Case iii).}

Let $\widetilde{h}=\min\{h,r_0\}$. We obviously have that $B_{\widetilde{h}}(y_0)\subset \Omega\setminus \overline{D_1}$ and
$B_d(y_0)\subset D_2$. Let us set

$$
d_1=\min\left\{\frac{d}{2},\frac{\overline{c_0}d_0}{4}\right\}.
$$
Since $d_1<d$ and $d_1<\widetilde{h}$, we have that
$B_{d_1}(y_0)\subset D_2\setminus \overline{D_1}$ and therefore
$\eta\geq \int_{B_{d_1}(y_0)}|\nabla^2 w_1|^2$.

Since $\frac{d_0}{2}<\widetilde{h}$,
$B_{\frac{d_0}{2}}(y_0)\subset \Omega\setminus \overline{D_1}$ so
that we can apply Proposition \ref{prop:SUCP_interior} with
$r_1=d_1$, $\overline{r}=\frac{d_0}{2}$, $r_2=\overline{c_0}\
\overline{r}$, obtaining $\eta\geq
C\left(\frac{2d_1}{d_0}\right)^{\tau_0}\int_{B_{\frac{d_0}{2}}(y_0)}|\nabla^2
w_1|^2$, with $C>0$ only depending on $\alpha_0$, $\gamma_0$,
$\Lambda_0$, $\delta_0$ and $F$. Next, by Proposition
\ref{prop:LPS}, recalling that $\frac{d_0}{r_0}$ only depends on
$M_0$, we derive that
$$
d_1
    \leq
    C r_0
    \left (
    \frac{ \eta   }{ r_0^2 \| \widehat{M}  \|_{H^{-1/2}(\partial \Omega)}^2  }
    \right )^{\frac{1}{\tau_0}},
$$
where $C>0$ only depends on $\alpha_0$, $\gamma_0$, $\Lambda_0$, $M_0$, $M_1$, $\delta_0$ and $F$. For $\eta$ small enough, $d_1<\frac{\overline{c_0}d_0}{4}$, so that
$d_1=\frac{d}{2}$ and
$$
d
    \leq
    C r_0
    \left (
    \frac{ \eta   }{ r_0^2 \| \widehat{M}  \|_{H^{-1/2}(\partial \Omega)}^2  }
    \right )^{\frac{1}{\tau_0}}.
$$
Collecting the three cases, we have

\begin{equation}
    \label{eq:9bis.1}
    d
    \leq
    C r_0
    \left (
    \frac{ \eta   }{ r_0^2 \| \widehat{M}  \|_{H^{-1/2}(\partial \Omega)}^2  }
    \right )^{\frac{1}{\tau_1}},
\end{equation}
with $\tau_1=\max\{\tau,\tau_0\}$ and $C>0$ only depends on $\alpha_0$, $\gamma_0$, $\Lambda_0$, $\alpha$, $M_0$, $M_1$, $\delta_0$ and $F$.

Finally, let us estimate $\delta$ in terms of $d$. By \eqref{eq:9bis.1}, for $\eta$ small enough, we have that
$$
d<\frac{r_0}{4\sqrt{1+M_0^2}}.
$$
Let us notice that if a point $y$ belongs to $\overline{D_1} \setminus D_2$, then $\hbox{dist}(y,\partial D_1)\leq d$.

Without loss of generality let $\overline{x}\in \overline{D_1}$
such that $\hbox{dist}(\overline{x},\overline{D_2})=\delta>0$.
Then $\overline{x} \in \overline{D_1} \setminus D_2$ and therefore
$\hbox{dist}(\overline{x},\partial D_1)\leq d$.

Let $w\in \partial D_1$ such that
$|w-\overline{x}|=\hbox{dist}(\overline{x},\partial D_1)\leq d$.

Letting $n$ the outer unit normal to $D_1$ at $w$, we may write
$\overline{x} =w-|w-\overline{x}|n$. By our regularity assumptions
on $D_1$, the truncated cone $C(\overline{x},
-n,2(\frac{\pi}{2}-\arctan M_0))\cap
B_{\frac{r_0}{4}}(\overline{x})$ having vertex $\overline{x}$,
axis $-n$ and width $2(\frac{\pi}{2}-\arctan M_0)$, in contained
in $D_1$.

On the other hand, by definition of $\delta$,
$B_\delta(\overline{x})\subset \Omega\setminus \overline{D_2}$, so
that the truncated cone $C(\overline{x},
-n,2(\frac{\pi}{2}-\arctan M_0))\cap
B_{\min\{\delta,r_0/4\}}(\overline{x})$ is contained in
$D_1\setminus \overline{D_2}$.

Let us see that $\delta<\frac{r_0}{4}$. In fact if, by
contradiction, $\delta\geq\frac{r_0}{4}$, we can consider the
point $z=\overline{x}-\frac{r_0}{4}n$. Since $z\in D_1\setminus
D_2$, as noticed above, $\hbox{dist}(z,\partial D_1)\leq d$. On
the other hand, by using the fact that $|z-w|\leq \frac{r_0}{2}$
and by the regularity of $D_1$, it is easy to compute that
$\hbox{dist}(z,\partial D_1)\geq \frac{r_0}{4\sqrt{1+M_0^2}}$,
obtaining a contradiction.

Hence $\min\{\delta, \frac{r_0}{4}\}=\delta$ and, by defining
$\overline{z}= \overline{x}-\delta n$ and by analogous
calculations, we can conclude that $\delta\leq (\sqrt{1+M_0^2})d$,
which is the desired estimate of $\delta$ in terms of $d$.

By Proposition \ref{prop:Cauchy1},
\begin{equation}
    \label{eq:ultima}
    d
    \leq
    C r_0
    \left (\log\left|\log\left(\frac{ \epsilon   }{ r_0^2 \| \widehat{M}  \|_{H^{-1/2}(\partial \Omega)}^2  }\right)\right|
    \right )^{-\frac{1}{2\tau_1}},
\end{equation}
with $\tau_1\geq 1$ and $C>0$ only depends on $\alpha_0$, $\gamma_0$,
$\Lambda_0$, $\alpha$, $M_0$, $M_1$, $\delta_0$ and $F$. By this
first rough estimate, there exists $\epsilon_0>0$, only depending
on on $\alpha_0$, $\gamma_0$, $\Lambda_0$, $\alpha$, $M_0$, $M_1$,
$\delta_0$ and $F$, such that, if $\epsilon\leq \epsilon_0$, then
$d\leq d_0$. Therefore the second part of Proposition
\ref{prop:Cauchy1} applies and the thesis follows.

\end{proof}

\section*{Acknowledgements}
Antonino Morassi is supported by PRIN 2015TTJN95 ``Identificazione
e diagnostica di sistemi strutturali complessi''. Edi Rosset and
Sergio Vessella are supported by Progetto GNAMPA 2017 ``Analisi di
problemi inversi: stabilit\`a e ricostruzione'', Istituto
Nazionale di Alta Matematica (INdAM). Edi Rosset is supported by
FRA2016 ``Problemi inversi, dalla stabilit\`a alla ricostruzione",
Universit\`a degli Studi di Trieste.

\end{document}